\documentclass{amsart}
\usepackage{amssymb,times,array,xcolor}

\newcounter{lemma}
\newtheorem{Theorem}{Theorem}
\newtheorem{Lemma}[lemma]{Lemma}
\newtheorem{Corollary}[lemma]{Corollary}
\newtheorem{Proposition}[lemma]{Proposition}
\newtheorem{theorem}{Theorem}

\theoremstyle{definition}

\def\F{\mathbb F}

\def\Q{\mathbb Q}

\def\Z{\mathbb Z}

\def\fp{\mathfrak p}

\def\GL{\mathrm{GL}}
\def\JS#1#2{\left(\frac{#1}{#2}\right)}
\def\Nm{\mathit{Nm}}

\def\Gal{\mathrm{Gal}}

\def\mod{\ \mathrm{mod}\ }

\begin{document}

\title{Jacobsthal identity for $\Q(\sqrt{-2})$}
\author{Ki-Ichiro Hashimoto}
\address{Department of Mathematics, Waseda University, 3-4-1, Okubo
  Shinjuku-ku, Tokyo 169, Japan}
\email{khasimot@waseda.ac.jp}

\author{Ling Long}
\address{Department of Mathematics, Iowa State University, Ames, IA
  50011, USA}
\email{linglong@iastaate.edu}

\author{Yifan Yang}
\address{Department of Applied Mathematics, National Chiao Tung
  University and National Center for Theoretical Sciences, Hsinchu,
  Taiwan}
\email{yfyang@math.nctu.edu.tw}

\begin{abstract} Let $p$ be a prime congruent to $1$ or $3$ modulo $8$
  so that the equation $p=a^2+2b^2$ is solvable in integers. In this
  paper, we obtain closed-form expressions for $a$ and $b$ in
  terms of Jacobsthal sums. This is analogous to a classical
  identity of Jacobsthal.
\end{abstract}

\date\today
\subjclass[2000]{Primary 11L10; secondary 11G05, 11G15, 11G40}

\thanks{Long was supported  by  NSA grant H98230-08-1-0076. Long and
  Yang would like to thank National Center for Theoretical Sciences in
  Hsinchu Taiwan at which place the project was initiated. Yang is
  supported by NSC Grant 97-2115-M-009-001. Part of this work was done
  while he visited the first author at the Waseda University. He would
  like to thank the first author and the Waseda University for the
  enormous hospitality.}

\maketitle

\begin{section}{Introduction}

Let $p$ be an odd prime. Recall that a famous result of Fermat states
that $p$ is the sum of two integer squares,  i.e., $p=a^2+b^2$ for
some integers $a$ and $b$, if and only if $p\equiv 1\mod 4$. It turns
out there is a closed-form formula for the integers $a$ and $b$ in
terms of the Legendre symbol $\JS\cdot p$. This is the classical
identity of Jacobsthal.

\begin{theorem}[Jacobsthal] \label{theorem: Jacobsthal}
  Let $p$ be a prime congruent to $1$ modulo
  $4$, and $n$ be a quadratic nonresidue modulo $p$. Set
  \begin{equation} \label{equation: Jacobsthal}
    A=\frac12\sum_{x=0}^{p-1}\JS{x^3-x}p, \qquad
    B=\frac12\sum_{x=0}^{p-1}\JS{x^3-n x}p.
  \end{equation}
  Then $A,B\in\Z$ and $A^2+B^2=p$. More precisely, if, for a prime $p$
  congruent to $1$ modulo $4$, we let $a$ be an odd integer and $b$ be
  an even integer such that $p=a^2+b^2$, then
  $$
    \sum_{x=0}^{p-1}\JS{x^3-n x}p=\begin{cases}
    \pm 2a, &\displaystyle\text{if }p\equiv 1\mod 4\text{ and }\JS{n}p=1, \\
    \pm 2b, &\displaystyle\text{if }p\equiv 1\mod 4\text{ and }
    \JS{n}p=-1, \\
    0, &\text{if }p\equiv 3\mod 4. \end{cases}
  $$
\end{theorem}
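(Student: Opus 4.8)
The plan is to work directly with the character sum $\phi(a)=\sum_{x=0}^{p-1}\JS{x^3+ax}p$, noting that $2A=\phi(-1)$ and $2B=\phi(-n)$. First I would treat integrality and the case $p\equiv3\pmod4$ together. The substitution $x\mapsto-x$ gives $\phi(a)=\JS{-1}p\,\phi(a)$, which already forces $\phi(a)=0$ when $p\equiv3\pmod4$, settling the last case. When $p\equiv1\pmod4$ this identity is vacuous, but now $\JS{-1}p=1$, so $x$ and $-x$ contribute equal summands while $x=0$ contributes $0$; grouping the $\frac{p-1}2$ antipodal pairs shows $\phi(a)$ is even, whence $A,B\in\Z$.

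Second, the substitution $x\mapsto tx$ with $t\neq0$ yields the homogeneity $\phi(at^2)=\JS tp\,\phi(a)$. Therefore $\phi(a)^2$ depends only on the square class of $a$: it equals a single value $R$ for every quadratic residue $a$ and a single value $N$ for every nonresidue. Since $-1$ is a residue when $p\equiv1\pmod4$, we have $\phi(-1)^2=R$ and, as $n$ and hence $-n$ is a nonresidue, $\phi(-n)^2=N$, so that $4A^2+4B^2=R+N$. Everything then reduces to proving $R+N=4p$.

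The key step is the second moment $\sum_{a\in\F_p}\phi(a)^2=2p(p-1)$. Expanding the square and using $\JS{x^3+ax}p=\JS xp\JS{x^2+a}p$, the sum becomes $\sum_{x,y}\JS{xy}p\sum_{a}\JS{(x^2+a)(y^2+a)}p$. The inner sum is a Legendre symbol of a monic quadratic in $a$ with discriminant $(x^2-y^2)^2$, hence equals $-1$ when $x^2\neq y^2$ and $p-1$ when $x^2=y^2$. Because $\sum_{x,y}\JS{xy}p=\big(\sum_x\JS xp\big)^2=0$, the off-diagonal terms contribute $+\sum_{x^2=y^2}\JS{xy}p$, and the whole expression collapses to $p\sum_{x^2=y^2}\JS{xy}p$. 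A short computation of the diagonal (again using $\JS{-1}p=1$) gives $\sum_{x^2=y^2}\JS{xy}p=2(p-1)$, so $\sum_a\phi(a)^2=2p(p-1)$. Since $\phi(0)=0$, summing $\phi(a)^2$ over the $\frac{p-1}2$ residues and $\frac{p-1}2$ nonresidues gives $\frac{p-1}2(R+N)=2p(p-1)$, i.e.\ $R+N=4p$, and hence $A^2+B^2=p$.

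Finally I would identify which sum is odd and extract the case analysis. The curve $E:y^2=x^3-x$ has full rational $2$-torsion $\{O,(0,0),(\pm1,0)\}$, so $4\mid\#E(\F_p)=p+1+\phi(-1)$; as $p+1\equiv2\pmod4$, this forces $\phi(-1)\equiv2\pmod4$, so $A=\tfrac12\phi(-1)$ is odd. Because $A^2+B^2=p$ makes exactly one of $A,B$ odd, we get $|A|=a$ and $|B|=b$. Unwinding through the square classes: if $\JS np=1$ then $-n$ is a residue and $\phi(-n)^2=R=4a^2$, giving $\phi(-n)=\pm2a$; if $\JS np=-1$ then $\phi(-n)^2=N=4b^2$, giving $\phi(-n)=\pm2b$. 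I expect the main obstacle to be the second-moment computation, where the degenerate diagonal $x^2=y^2$ must be isolated correctly; the parity step is the other delicate point, and could alternatively be obtained by counting orbits of the order-$4$ automorphism $(x,y)\mapsto(-x,\sqrt{-1}\,y)$ on the affine curve.
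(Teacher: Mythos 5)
Your proof is correct, but it takes a genuinely different route from the paper. The paper's proof is arithmetic-geometric: it starts from the known $L$-function of $E_1:y^2=x^3-x$ (Lemma \ref{lemma: L for E}), observes that $E_1$ and $E_n$ become isomorphic over $\Q(i,\sqrt[4]{n})$, and uses Clifford's lemma (Lemma \ref{lemma: representation in cyclic}) to conclude that the Hecke character attached to $E_n$ is the one attached to $E_1$ twisted by a quartic character of $\Gal(\Q(i,\sqrt[4]{n})/\Q(i))$, which is why the answer toggles between $\pm 2a$ and $\pm 2b$ according to $\JS{n}{p}$. You instead give the classical elementary argument (essentially the Gauss-style proof the paper alludes to in the introduction): the substitutions $x\mapsto -x$ and $x\mapsto tx$ reduce everything to the two square classes, the second moment $\sum_a\phi(a)^2=2p(p-1)$ forces $R+N=4p$, and the full rational $2$-torsion of $y^2=x^3-x$ pins down the parity of $A$. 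All the individual steps check out: the inner sum over $a$ of $\JS{(x^2+a)(y^2+a)}{p}$ is indeed $-1$ off the locus $x^2=y^2$ and $p-1$ on it, the diagonal count $2(p-1)$ uses $\JS{-1}{p}=1$ correctly, and the congruence $\phi(-1)\equiv 2\pmod 4$ from $4\mid\#E(\F_p)$ correctly identifies $A$ as the odd coordinate. What your approach buys is self-containedness -- no $L$-functions, no Galois representations, and a cleaner identification of which sum gives $a$ and which gives $b$. What the paper's approach buys is scalability: Section \ref{section: AG to Jacobsthal} is deliberately a warm-up for the genus-$2$ situation of Theorem \ref{theorem: main}, where the curves $y^2=x^5+x$ and $y^2=x^6+4x^5+10x^4-20x^2-16x-8$ are handled by exactly the same restriction-and-twist mechanism, and where a direct second-moment computation with sextic polynomials would be far less tractable.
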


In general, we shall refer to a sum of the form
$$
  \sum_{x=0}^{p-1}\JS{f(x)}p, \qquad f(x)\in\Z[x]
$$
as a \emph{Jacobsthal sum}.

There are many proofs for Jacobsthal's identity. Gauss supplied an
elementary proof using only basic properties of the Legendre symbols.
(See \cite[Page 91]{Modular123}.) One can also utilize properties of
Jacobi sums to prove Theorem \ref{theorem: Jacobsthal}. (See
\cite[Page 190]{Berndt-Evans-Williams}.) On the other hand, the sums
in \eqref{equation: Jacobsthal} have the obvious meaning of counting
points on the elliptic curves $y^2=x^3-x$ and $y^2=x^3-n x$ over
$\F_p$, respectively, so it is possible to use tools from arithmetic
geometry to give yet another proof. In Section \ref{section: AG to
  Jacobsthal}, we will briefly explain this approach.

Now observe that the equation $p=a^2+b^2=(a+bi)(a-bi)$ can be regarded
as the prime factorization of $p$ in the ring of integers $\Z[i]$ in
the number field $\Q(i)$. Naturally, one may ask whether analogous
identities exist in the case of other number fields. Let
$\Q(\sqrt{-D})$ be an imaginary quadratic number field with
discriminant $-D$. If $\Q(\sqrt{-D})$ has class number one, then
whether a prime $p$ splits in $\Q(\sqrt{-D})$ depends solely on the
value of $\JS{-D}p$. That is, if $\JS{-D}p=1$, then there are integers
$a$ and $b$ such that $p=f_D(a,b)$, where
$$
  f_D(x,y)=\begin{cases}
  \displaystyle x^2+xy+\frac{1+D}4y^2, &\text{if }D\text{ is odd}, \\
  \displaystyle x^2+\frac{D}4y^2, &\text{if }D\text{ is even}
  \end{cases}
$$
is the principal form of discriminant $-D$. Then one can ask whether
these integers $a$ and $b$ can be expressed as Jacobsthal sums in a
uniform way. For the case $D=3$, it is relatively easy. Using Jacobi
sums, we find the following analogue of Theorem \ref{theorem:
  Jacobsthal}.

\begin{theorem}[{Chan-Long-Yang \cite{Cubic}}] \label{theorem: CubicJ}
Let $p$ be a prime satisfying $p\equiv 1\mod 6.$ Suppose $n$ is
any integer such that $x^3\equiv n \mod p$ is
not solvable. Then
$$
  3p=A^2+AB+B^2,
$$
where
$$
  A=\sum_{x=0}^{p-1}\left(\frac{x^3+1}{p}\right) \quad\text{and}\quad
  B=\left(\frac n{p}\right)\sum_{x=0}^{p-1}
    \left(\frac{x^3+n}{p}\right).
$$
\end{theorem}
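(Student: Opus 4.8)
The plan is to write each of the two Jacobsthal sums occurring in $A$ and $B$ as a combination of a single Jacobi sum and its complex conjugate, and then to deduce the identity $3p=A^2+AB+B^2$ from the norm evaluation of that Jacobi sum. Throughout, let $\psi=\JS\cdot p$ be the quadratic (Legendre) character modulo $p$, let $\chi$ be a fixed cubic character modulo $p$ taking values in $\{1,\omega,\omega^2\}$ on $\F_p^\times$ with $\omega$ a primitive cube root of unity, and set
$$
  J=\sum_{t=0}^{p-1}\chi(t)\psi(1-t),
$$
the Jacobi sum of $\chi$ and $\psi$. The only external fact I would invoke about $J$ is the classical norm relation $J\overline J=p$, which holds because $\chi$, $\psi$, and $\chi\psi$ are all nontrivial.

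The first and main step is to establish, for every $c\in\F_p^\times$, the formula
$$
  \phi(c):=\sum_{x=0}^{p-1}\JS{x^3+c}p=\psi(c)\bigl(\chi(c)\,J+\overline{\chi(c)}\,\overline J\bigr).
$$
To prove it I would count cube roots: for $t\neq0$ the number of $x$ with $x^3=t$ equals $1+\chi(t)+\overline{\chi(t)}$, so grouping $\phi(c)=\sum_t\psi(t+c)\,\#\{x:x^3=t\}$ by the value $t=x^3$ produces three pieces. The contribution of the constant term $1$ assembles into $\sum_{\text{all }t}\psi(t+c)=0$; the substitution $t=cu$ converts the $\chi$-piece into $\psi(c)\chi(c)\sum_u\chi(u)\psi(u+1)$, and the $\overline\chi$-piece into its conjugate. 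Finally $\sum_u\chi(u)\psi(u+1)=\chi(-1)J=J$, since $-1=(-1)^3$ is a cube and hence $\chi(-1)=1$. This bookkeeping — keeping track of the conventions $\chi(0)=0$ and of the isolated $t=0$ term — is where I expect the only real care to be required; everything afterward is formal.

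Granting this, the specialization is immediate. Taking $c=1$ gives $A=\phi(1)=J+\overline J$, and taking $c=n$ gives $\phi(n)=\psi(n)\bigl(\chi(n)J+\overline{\chi(n)}\,\overline J\bigr)$; the extra Legendre factor in the definition of $B$ is exactly what is needed, since $\psi(n)^2=1$ yields
$$
  B=\psi(n)\,\phi(n)=\chi(n)\,J+\overline{\chi(n)}\,\overline J.
$$
Because $n$ is a cubic nonresidue, $\chi(n)\in\{\omega,\omega^2\}$; without loss of generality $\chi(n)=\omega$, so $B=\omega J+\overline\omega\,\overline J$. (Passing to a nonresidue in the other cubic class, or replacing $\chi$ by $\overline\chi$, merely interchanges $\omega\leftrightarrow\omega^2$, i.e.\ applies complex conjugation, and so leaves the real integers $A,B$ unchanged.) It remains to expand $A^2+AB+B^2$ and simplify using $J\overline J=p$: the coefficient of $J^2$ and the coefficient of $\overline J^2$ each collapse to $1+\omega+\omega^2=0$, while the terms proportional to $J\overline J=p$ add up to $2p-p+2p=3p$. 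This gives $A^2+AB+B^2=3p$, completing the proof.
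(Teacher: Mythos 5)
Your proof is correct: the decomposition $\sum_x\JS{x^3+c}p=\psi(c)\bigl(\chi(c)J+\overline{\chi(c)}\,\overline J\bigr)$ via counting cube roots is valid (including the $t=0$ bookkeeping and $\chi(-1)=1$), and the final expansion using $J\overline J=p$ and $1+\omega+\omega^2=0$ checks out. The paper itself states this theorem without proof, citing Chan--Long--Yang \cite{Cubic} and remarking only that it is obtained ``using Jacobi sums''; your argument is exactly the Jacobi-sum proof being alluded to, so there is nothing to flag.
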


The main purpose of this paper is to prove an analogue of Jacobsthal's
identity in the case $D=8$. Note that $\JS{-2}p=1$ if and only if
$p\equiv 1,3\mod 8$. For such a prime $p$, there exist integers $a$
and $b$ such that $p=a^2+2b^2$.

\begin{Theorem} \label{theorem: main}
  Let $p$ be a prime congruent to $1$ or $3$ modulo $8$. Let
  \begin{equation} \label{equation: A}
    A=\frac12\sum_{x=0}^{p-1}\JS{x^3+4x^2+2x}p.
  \end{equation}
  Moreover,
  \begin{enumerate}
  \item when $p\equiv 1\mod 8$ and $n$ is a quadratic nonresidue modulo
    $p$, set
  \begin{equation} \label{equation: B 1}
    B=\frac14\sum_{x=0}^{p-1}\JS{x^5+nx}p,
  \end{equation}
  and
  \item when $p\equiv 3\mod 8$, set
  \begin{equation} \label{equation: B 2}
  B=\frac14\left(1+\sum_{x=0}^{p-1}\JS{x^6+4x^5+10x^4-20x^2-16x-8}p
    \right).
  \end{equation}
  \end{enumerate}
  Then $A$ and $B$ are integers and satisfy $A^2+2B^2=p$.
\end{Theorem}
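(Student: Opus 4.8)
The plan is to interpret each Jacobsthal sum as (minus) the trace of Frobenius on a curve with complex multiplication by an order in $\Q(\zeta_8)\supset\Q(\sqrt{-2})$, and then to extract $a$ and $b$ from the resulting Weil numbers. Fix throughout the factorization $p=\pi\bar\pi$ in $\Z[\sqrt{-2}]$ with $\pi=a+b\sqrt{-2}$. Since $p=a^2+2b^2$, it suffices to prove that $A=\pm a$ and $B=\pm b$ (and that both are integers); then $A^2+2B^2=a^2+2b^2=p$ is immediate. The integrality statements will fall out of the same computations that identify $A$ and $B$, since the relevant traces turn out to be even, respectively divisible by $4$.

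First I would dispose of $A$. Writing $f(x)=x^3+4x^2+2x$, one checks that $E\colon y^2=f(x)$ has $j$-invariant $8000=20^3$, and hence CM by the maximal order $\Z[\sqrt{-2}]$. As in the arithmetic-geometric treatment of Theorem~\ref{theorem: Jacobsthal} sketched in Section~\ref{section: AG to Jacobsthal}, the sum $2A=\sum_x\JS{f(x)}p$ equals $-a_p(E)$. By the theory of complex multiplication (Deuring), for $p$ split in $\Q(\sqrt{-2})$—that is, exactly for $p\equiv1,3\mod8$—one has $a_p(E)=\pi+\bar\pi=2a$ after normalizing the sign of $a$. Thus $A=\pm a\in\Z$ and $A^2=a^2$; restricting to split $p$ is precisely what forces $a_p$ to be even.

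Next comes $B$ for $p\equiv1\mod8$, the heart of the matter. Here $8\mid p-1$, so a character $\rho$ of exact order $8$ exists; put $\chi=\rho^4=\JS\cdot p$. Writing $4B=S=\sum_x\chi(x)\chi(x^4+n)$ and using $\chi(\zeta_4)=(-1)^{(p-1)/4}=1$ (so that all four fourth-roots of a given fourth power contribute equally), a short manipulation collapses $S$ into a sum of Jacobi sums:
\begin{equation*}
  S=\chi(n)\sum_{a\in(\Z/8)^\times}\rho^a(n)\,J(\rho^a,\rho^4)
   =-\mathrm{Tr}_{\Q(\zeta_8)/\Q}\bigl(\rho(n)\,J(\rho,\rho^4)\bigr),
\end{equation*}
since $\sigma_a\bigl(J(\rho,\rho^4)\bigr)=J(\rho^a,\rho^{4a})=J(\rho^a,\rho^4)$ exhibits the four sums as the Galois conjugates of $J(\rho,\rho^4)\in\Z[\zeta_8]$. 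The octic Frobenius is thus $\pi_8=\rho(n)J(\rho,\rho^4)$, a Weil number with $|\sigma(\pi_8)|^2=p$ for every embedding $\sigma$; geometrically this reflects that $C\colon y^2=x^5+nx$ carries the order-$8$ automorphism $(x,y)\mapsto(\zeta_4 x,\zeta_8 y)$, so its Jacobian has CM by $\Z[\zeta_8]$ and $S=-\mathrm{Tr}(\pi_8)$ is its point-count defect. It remains to descend from $\pi_8$ to $\pi$. Using $\Gal(\Q(\zeta_8)/\Q(\sqrt{-2}))=\{1,\sigma_3\}$, where $\sigma_3\colon\zeta_8\mapsto\zeta_8^3$ fixes $\sqrt{-2}=\zeta_8+\zeta_8^3$, I would show by a Stickelberger/Gauss-sum computation that
\begin{equation*}
  \pi_8+\sigma_3(\pi_8)=\sqrt{-2}\,\pi=-2b+a\sqrt{-2},
\end{equation*}
whence $\mathrm{Tr}_{\Q(\zeta_8)/\Q}(\pi_8)=\mathrm{Tr}_{\Q(\sqrt{-2})/\Q}(\sqrt{-2}\,\pi)=-4b$. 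This gives $S=4b$, so $4\mid S$, $B=b\in\Z$, and $B^2=b^2$; the nonresidue $n$ enters exactly to supply the twist $\rho(n)$ that selects the generator $\sqrt{-2}\,\pi$ rather than a conjugate.

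Finally, for $p\equiv3\mod8$ the octic characters over $\F_p$ no longer exist (now $\gcd(8,p-1)=2$), and $p$ has residue degree $2$ in $\Q(\zeta_8)$, splitting as $\mathfrak P_1\mathfrak P_2$. I would run a parallel argument for $y^2=g(x)$ with $g(x)=x^6+4x^5+10x^4-20x^2-16x-8$, again CM by $\Q(\zeta_8)$ but with Frobenius now living over $\F_{p^2}$; the additive ``$+1$'' in \eqref{equation: B 2} corrects a degenerate contribution of the model so that $4B$ once more computes $\pm4b$. The main obstacle, in both cases, is the CM-type bookkeeping that pins down $\pi_8$ as an explicit generator and forces the compatibility $\pi_8+\sigma_3(\pi_8)=\sqrt{-2}\,\pi$ between the octic Hecke character and the quadratic one attached to $E$; the $p\equiv3$ case is more delicate because one must track a degree-$2$ Frobenius and justify the precise normalizing constant. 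Once this compatibility is established, the integrality of $A$ and $B$ and the relation $A^2+2B^2=p$ follow at once.
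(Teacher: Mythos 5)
Your treatment of $A$ and of case (1) is essentially sound and close to the paper's: the paper also obtains $A=\pm a$ from the complex multiplication of $y^2=x^3+4x^2+2x$ by $\Z[\sqrt{-2}]$ (Lemma \ref{lemma: L for E8} and Corollary \ref{corollary: a-part}), and for $p\equiv 1\mod 8$ it simply invokes Theorem 6.2.3 of Berndt--Evans--Williams, which is exactly the octic Jacobi-sum computation you sketch. Your key identity $\pi_8+\sigma_3(\pi_8)=\sqrt{-2}\,\pi$ is the content of that citation; it is deferred in your write-up (``I would show by a Stickelberger/Gauss-sum computation'') but it is a standard, fillable step.

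The genuine gap is in case (2), $p\equiv 3\mod 8$, which is the main novelty of the theorem and where your proposal stops at ``I would run a parallel argument.'' No such parallel argument exists in the form you describe: for $p\equiv 3\mod 8$ one has $\gcd(8,p-1)=2$, so there are no octic (indeed no quartic) characters of $\F_p^\times$, hence no Jacobi-sum expression for the sum in \eqref{equation: B 2}, and the Frobenius at $p$ does not act through a single character of $\Q(\zeta_8)$ in the way your $\pi_8$ formalism presumes. What is actually required --- and what occupies Sections \ref{subsection: hyperelliptic}--\ref{subsection: proof} of the paper --- is: (i) an explicit isomorphism of $y^2=x^6+4x^5+10x^4-20x^2-16x-8$ with $y^2=x^5+x$ over $\Q(\theta)$, $\theta=2^{3/4}(\sqrt2-1)^{3/4}$, together with the decomposition $\rho_{X_1,\ell}=\rho_{E_1,\ell}\oplus\rho_{E_2,\ell}$; (ii) base change to $K=\Q(\zeta_8)$ so that $L=\Q(\theta,i)/K$ is cyclic Kummer, and Clifford theory (Lemma \ref{lemma: representation in cyclic}) to conclude the two representations differ by quartic characters $\chi_k$ of $\Gal(L/K)$; (iii) an anchoring computation at $p=3$ to pin down which characters occur; (iv) the evaluation $\chi_k(\fp)=\pm i^k$ for $\fp$ over $p\equiv 3\mod 8$ (Lemma \ref{lemma: cases mod 8}); and (v) a Galois-conjugacy argument showing that the only rational characteristic polynomial with roots among $\zeta_8^m(a\pm b\sqrt{-2})$, $m$ odd, has trace $\pm 4b$. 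The ``CM-type bookkeeping'' and ``precise normalizing constant'' you defer are precisely items (i)--(iv), i.e.\ the substance of the proof. (A minor point: the $+1$ in \eqref{equation: B 2} is not a correction for a ``degenerate contribution''; it simply reflects the two rational points at infinity on the degree-$6$ model, giving $\# X_2(\F_p)=p+2+\sum_{x}\JS{g(x)}{p}$ rather than $p+1+\cdots$.)
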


One remarkable feature of the main theorem is the existence of
polynomials $f(x)$ and $g(x)$ in $\Z[x]$ such that the integers $A$
and $B$ in $p=A^2+2B^2$ can be expressed as Jacobsthal sums associated
to $f(x)$ and $g(x)$, respectively, for \emph{all} primes $p$
congruent to $3$ modulo $8$.

Our approach is mainly arithmetic-geometric. The elliptic curve
$y^2=x^3+4x^2+2x$ corresponding to the Jacobsthal sum in
\eqref{equation: A} has complex multiplication by the order
$\Z[\sqrt{-2}]$. Thus, by a famous theorem of Deuring (see
\cite[Theorem II.10.5]{Silverman-adv}), its
$L$-function is the same as the $L$-function of a Hecke
Gr\"ossencharakter on $\Q(\sqrt{-2})$. It is straightforward to verify
that the quantity $A$ in \eqref{equation: A} has the same absolute
value as the integer $a$ in $p=a^2+2b^2$. The two hyperelliptic curves
$y^2=x^5+nx$ and $y^2=x^6+4x^5+10x^4-20x^2-16x-8$ corresponding to
the Jacobsthal sums in \eqref{equation: B 1} and \eqref{equation: B 2}
are both isomorphic to $y^2=x^5+x$, although the two isomorphisms are
over two different number fields. Thus, one can deduce information
about the $L$-functions of the two hyperelliptic curves from that of
$y^2=x^5+x$. The details will be carried out in Section \ref{section:
  proof}.
\end{section}

\begin{section}{Arithmetic-geometric approach to Jacobsthal's
    identity} \label{section: AG to Jacobsthal}

In this section, we will present an arithmetic-geometric proof of
Theorem \ref{theorem: Jacobsthal}. This will serve as an illustrating
example how one can obtain information about the $L$-function of an
algebraic curve over $\Q$ from that of another algebraic
curve over $\Q$, assuming that the two curves are isomorphic over a
number field.

For a nonzero integer $n$, let $E_n$ denote the elliptic curve
$y^2=x^3-n x$. Clearly, we have, for a prime $p$ relatively
prime to $2n$,
$$
  \#E_n(\F_p)=p+1+\sum_{x=0}^{p-1}\JS{x^3-nx}p.
$$
Therefore, the reciprocal of the $p$-factor of $L(E_n/\Q,s)$ is
equal to
$$
  1+\left(\sum_{x=0}^{p-1}\JS{x^3-n x}p\right)p^{-s}+p^{1-2s}.
$$
For the case $n={1}$, the $L$-function $L(E_{1}/\Q,s)$ is well-known.

\begin{Lemma} \label{lemma: L for E} We have
$$
  L(E_{1}/\Q,s)=\prod_{p\equiv 1\mod 4}\frac1{1-2\epsilon_pa_p p^{-s}+p^{1-2s}}
  \prod_{p\equiv 3\mod 4}\frac1{1+p^{1-2s}},
$$
where for $p\equiv 1\mod 4$, $a_p$ and $b_p$ are positive integers with
$a_p$ odd and $b_p$ even such that $p=a_p^2+b_p^2$, and
$$
  \epsilon_p=\JS{-1}{a_p}(-1)^{b_p/2}.
$$
\end{Lemma}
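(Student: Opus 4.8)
The plan is to compute, prime by prime, the Jacobsthal sum $S_p:=\sum_{x=0}^{p-1}\JS{x^3-x}p$, since by the discussion preceding the lemma the local factor of $L(E_1/\Q,s)$ at a good prime $p$ is $1/(1+S_pp^{-s}+p^{1-2s})$, while the only prime of bad reduction, $p=2$, contributes the trivial factor $1$. The underlying structure is the complex multiplication of $E_1\colon y^2=x^3-x$ by $\Z[i]$, realized by $[i]\colon(x,y)\mapsto(-x,iy)$; as in Deuring's theorem, the reduction type at an odd prime $p$ is governed by its splitting in $\Q(i)$, so I would separate the inert primes $p\equiv3\mod4$ (where one expects supersingular reduction, $S_p=0$) from the split primes $p\equiv1\mod4$ (ordinary reduction).

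For $p\equiv3\mod4$ the vanishing is immediate and needs no CM input: the involution $x\mapsto-x$ sends $x^3-x$ to $-(x^3-x)$, and since $\JS{-1}p=-1$ every summand changes sign, whence $S_p=0$ and the local factor is $1/(1+p^{1-2s})$.

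The heart of the matter is the split case $p\equiv1\mod4$, where $p=\pi\overline\pi$ in $\Z[i]$. I would evaluate $S_p$ by the classical Gauss-and-Jacobi-sum method: factoring $x^3-x=x(x-1)(x+1)$ and rewriting the Legendre symbol through a quartic character $\chi_4$, one expresses $S_p$ via a Jacobi sum attached to $\chi_4$, a Gaussian integer of norm $p$. This should give $S_p=-(\pi+\overline\pi)=-2\,\mathrm{Re}(\pi)$ for a suitable generator $\pi=a+bi$ of a prime above $p$, hence $|S_p|=2a$ with $p=a^2+b^2$ — recovering Jacobsthal's $\pm2a$. The main obstacle, and the only genuinely delicate point, is determining the sign. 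Using Stickelberger-type congruences for the Jacobi sum one shows that $\pi$ is forced to be the primary generator, $\pi\equiv1\mod(1+i)^3$; writing this $\pi=a'+b'i$ with $|a'|=a_p$ and $|b'|=b_p$, the primary congruence reduces (since $b'$ is even) to $a'\equiv b_p+1\mod4$. A four-case analysis on $a_p,b_p\mod4$ then yields $\mathrm{sign}(a')=(-1)^{(a_p-1)/2}(-1)^{b_p/2}=\JS{-1}{a_p}(-1)^{b_p/2}=\epsilon_p$, so that $S_p=-2\epsilon_pa_p$ and the local factor is $1/(1-2\epsilon_pa_pp^{-s}+p^{1-2s})$. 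Assembling the three computations gives the stated product.
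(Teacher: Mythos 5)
Your proposal is correct: the reduction to the Jacobsthal sum, the vanishing for $p\equiv 3\pmod 4$ via $x\mapsto -x$, and the evaluation for $p\equiv 1\pmod 4$ by Jacobi sums with the primary normalization $\pi\equiv 1\pmod{(1+i)^3}$ all check out, and your four-case sign analysis does recover $\epsilon_p=\JS{-1}{a_p}(-1)^{b_p/2}$ (I verified it numerically at $p=5$ and $p=13$). The paper itself gives no argument here --- it simply cites Koblitz --- and the Gauss/Jacobi-sum computation you outline is essentially the content of that reference, so your approach matches the intended one.
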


\begin{proof} See \cite[Page 59]{Koblitz}. (Note that in
  \cite{Koblitz}, the $L$-function $L(E_{1}/\Q,s)$ is described
  differently, but it is easy to check that it gives the same
  $L$-function as above.)
\end{proof}

We now extract informations about $L(E_n/\Q,s)$ from the above
lemma using the fact that $E_{1}$ and $E_n$ are isomorphic over
$\Q(\sqrt[4]{n})$.

Recall that for a given elliptic curve $E$ defined over a number field
$K$ and an arbitrary rational prime $\ell$, one can associate to $E$ a
continuous representation  $$\rho_{E,\ell}: \Gal(\overline{K}/K)\to
\GL(2,\Q_{\ell})$$ via the Tate-module $T_{\ell}E$ of $E$. For
detailed discussions, see \cite{Silverman}. In general, when $C$ is a smooth
irreducible curve defined over $K$ of genus $g$, one can
associate to $C$ a $2g$-dimensional representation of
$\Gal(\overline{K}/K)$ by considering the Tate module of the
Jacobian of $C$. The $L$-function $L(\rho_{C,\ell},s)$ of
$\rho_{C,\ell}$ is defined by  local Euler factors. Let  $\mathcal
O_K$ be the ring of integers of $K$. For any prime ideal $\fp$ of
$\mathcal O_K$ at which $\rho_{C,\ell}$ is unramified,    the
arithmetic Frobenius $\text{Frob}_\fp$ acts on the representation space
of $\rho_{C,\ell}$ with characteristic polynomial $P_{\fp}(T)$ of
degree $2g$.  Then the local Euler $\fp$-factor of
$L(\rho_{C,\ell},s)$ is $P_{\fp}(q^{-s})^{-1}$ where $q=|\mathcal
O_K/\fp|$.
Moreover, the Hasse-Weil zeta function of the reduction of $C$ modulo
$\fp$ is equal to
$$
  \frac{P_{\fp}(t)}{(1-t)(1-qt)}.
$$
(cf. \cite{Milne}.)
From now on, by the $L$-function $L(C,s)$  we mean the $L$-function
$L(\rho_{C,\ell},s)$ for any rational prime $\ell$.

Now let us first recall a property of group representations.

\begin{Lemma} \label{lemma: representation in cyclic}
  Let $G$ be a group and $H$ be a normal subgroup of $G$ of
  finite index such that $G/H$ is cyclic. Assume that
  $\rho_1:G\to\GL(V_1)$ and $\rho_2:G\to\GL(V_2)$ are two irreducible
  representations over an algebraically closed field of characteristic
  not dividing $|G/H|$ such that the restrictions of $\rho_1$ and
  $\rho_2$ to $H$ are isomorphic. Then $\rho_1=\rho_2\otimes\chi$ for
  some representation $\chi$ of $G$ of degree $1$ that is lifted from a
  character of $G/H$.
\end{Lemma}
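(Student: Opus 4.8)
The plan is to produce the twisting character as the character by which the cyclic quotient $G/H$ acts on a line of $H$-equivariant maps, and then to finish with Schur's lemma. First I would form the space
\[
  W=\Hom_H(V_1,V_2)=\{\phi\colon V_1\to V_2 : \phi\,\rho_1(h)=\rho_2(h)\,\phi\ \text{for all }h\in H\}.
\]
The group $G$ acts on the full space $\Hom(V_1,V_2)$ by $(g\cdot\phi)=\rho_2(g)\,\phi\,\rho_1(g)^{-1}$; since $H$ is normal this action carries $W$ into itself, and by the defining relation every element of $H$ acts trivially on $W$, so $W$ is naturally a representation of $G/H$. The hypothesis $\Res_H\rho_1\cong\Res_H\rho_2$ supplies an $H$-isomorphism $V_1\to V_2$, which is a nonzero element of $W$; hence $W\neq0$.

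Next I would extract a one-dimensional constituent. Because $G/H$ is cyclic, the group algebra $k[G/H]$ over the algebraically closed ground field $k$ is commutative, and the hypothesis that the characteristic of $k$ does not divide $|G/H|$ makes it semisimple by Maschke's theorem; as $k$ is algebraically closed, all of its simple modules are one-dimensional. Thus the nonzero $G/H$-module $W$ contains a one-dimensional submodule: there exist a nonzero $\phi\in W$ and a character $\eta$ of $G/H$ with $g\cdot\phi=\eta(g)\,\phi$ for every $g\in G$. Setting $\chi=\eta^{-1}$, regarded as a degree-one representation of $G$ inflated from $G/H$, this relation rearranges, for all $g\in G$, to
\[
  \phi\,\rho_1(g)=\chi(g)\,\rho_2(g)\,\phi=(\rho_2\otimes\chi)(g)\,\phi,
\]
so $\phi$ is a nonzero homomorphism of $G$-representations from $\rho_1$ to $\rho_2\otimes\chi$.

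Finally, since $\rho_1$ and $\rho_2\otimes\chi$ are both irreducible and $\phi\neq0$, Schur's lemma forces $\phi$ to be an isomorphism, so $\rho_1\cong\rho_2\otimes\chi$ with $\chi$ lifted from $G/H$, which is the assertion. I expect the only genuine content — the step deserving care — to be the passage from $W\neq0$ to the existence of a \emph{character} of $G/H$ inside $W$: this is exactly where cyclicity (in effect, the commutativity of $k[G/H]$) together with the characteristic condition is used, guaranteeing that $W$ has a one-dimensional subrepresentation rather than only higher-dimensional irreducible pieces. The remaining points — that the conjugation formula defines an action preserving $W$, that $H$ acts trivially on $W$, and the concluding appeal to Schur's lemma — are routine verifications.
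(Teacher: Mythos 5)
Your argument is correct, and every step checks out: the conjugation action $g\cdot\phi=\rho_2(g)\,\phi\,\rho_1(g)^{-1}$ does preserve $\Hom_H(V_1,V_2)$ precisely because $H$ is normal, $H$ acts trivially by the intertwining relation, the hypothesis on restrictions guarantees $W\neq 0$, semisimplicity of $k[G/H]$ (Maschke plus the characteristic assumption) together with commutativity and algebraic closure forces a one-dimensional submodule, and the final appeal to Schur's lemma needs only that a nonzero map between irreducibles is an isomorphism. The paper itself offers no argument at all for this lemma --- it simply cites Clifford's 1937 paper on representations induced in an invariant subgroup --- so your proof is not so much a different route as the only actual proof on the table; it is the standard self-contained derivation one would extract from the Clifford-theory machinery in this special case. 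Two small observations: you never use cyclicity per se, only that $G/H$ is abelian (you note this yourself), so your argument proves a slightly more general statement than the one quoted; and your use of semisimplicity of the group algebra, rather than a common-eigenvector argument, means you do not even need $W$ to be finite-dimensional. Both points are features, not gaps.
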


\begin{proof} See \cite{Clifford}.
\end{proof}

We now give a proof of Theorem \ref{theorem: Jacobsthal}.

\begin{proof}[Proof of Theorem \ref{theorem: Jacobsthal}]
  Let $n$ be a nonsquare integer. Let $E_{1}$ and $E_n$ denote the
  elliptic curves $y^2=x^3-x$ and $y^2=x^3-n x$. They are isomorphic
  over $\Q(\sqrt[4]{n})$, which is not Galois over $\Q$. Note that
  both curves have complex multiplication by the ring of Gaussian
  integers $\Z[i]$ as sending $(x,y)$ to $(-x,iy)$ is an automorphism
  on both curves. Consequently,
  $$
    \sum_{x=0}^{p-1}\JS{x^3-n x}p=0
  $$
  when $p\equiv 3\mod 4$. For any rational prime $\ell$, the Galois
  representations $\rho_{E_{1},\ell}$ and $\rho_{E_n,\ell}$ of
  $\text{Gal}(\overline{\Q}/\Q)$ are absolutely irreducible while
  their restrictions  to $\text{Gal}(\overline{\Q}/\Q(i))$ decompose
  as
  $$
    \rho_{E_{1},\ell}\big|_{\text{Gal}(\overline{\Q}/\Q(i))}=\pi_{1}\oplus
    \bar \pi_{1}, \qquad
    \rho_{E_n,\ell}\big|_{\text{Gal}(\overline{\Q}/\Q(i))}=\pi_n\oplus \bar
    \pi_n,
  $$
  where $\pi_{1},\pi_n$ are 1-dimensional representations of
  ${\text{Gal}(\overline{\Q}/\Q(i))}$ and $\bar \pi_{1},\bar \pi_n$
  their  complex conjugates respectively. Since  $E_{1}$ and $E_n$ are
  isomorphic over $L=\Q(i,\sqrt[4]{n})$,
  $$
    \rho_{E_{1},\ell}\big|_{\Gal(\overline{\Q}/L)}
   =\rho_{E_n,\ell}\big|_{\Gal(\overline{\Q}/L)}.
  $$
  Without loss of generality we may assume that
  $\pi_{1}|_{\Gal(\overline{\Q}/L)}=\pi_n|_{\Gal(\overline{\Q}/L)}.$
  Then by Lemma \ref{lemma: representation in cyclic},
  $\pi_1=\pi_n\otimes\chi$ where $\chi$ is a character
  $\Gal(\overline\Q/\Q(i))$ lifted from a character of 
  $\Gal(L/\Q(i))\cong \Z/4\Z$ with 
  kernel $\Gal(\overline\Q/L)$ as $L=\Q(i,\sqrt[4]{n})$ is the smallest
  field over which the two curves are isomorphic. When $p\equiv 1\mod
  4$ and $p\nmid 2n\ell$, $\F_p(\sqrt[4]{n})$ is a quartic extension
  of $\F_p$ if and only if $\left ( \frac{n}p\right )=-1$, i.e.
  $\chi(\text{Frob}_p)=\pm i$ if and only if $\left ( \frac{n}p\right
  )=-1$. By Lemma \ref{lemma: L for E},
  $\text{Re}\{\pi_1(\text{Frob}_p)\}=\pm a$. Therefore,
  $\text{Re}\{\pi_n(\text{Frob}_p)\}=\text{Re}\{\pi_1(\text{Frob}_p)\otimes
  \chi(\text{Frob}_p)\}=\pm a$ if $\left ( \frac{n}p\right )=1$ and
  $\text{Re}\{\pi_n(\text{Frob}_p)\}=\pm b$ otherwise. This proves the
  theorem.
\end{proof}
\end{section}

\begin{section}{Proof of Theorem \ref{theorem: main}} \label{section: proof}

In this section, we will prove Theorem \ref{theorem: main}. As
explained in the introduction section, since the elliptic curve
$y^2=x^3+4x^2+2x$ has complex multiplication by $\Z[\sqrt{-2}]$, its
$L$-function is easy to write down in terms of a Hecke
Gr\"ossencharakter on $\Q(\sqrt{-2})$. This is done in Section
\ref{subsection: elliptic curve}. We then show that the hyperelliptic
curve $y^2=x^6+4x^5+10x^4-20x^2-16x-8$ is isomorphic to $y^2=x^5+x$
over a Kummer extension $L$ of $\Q(e^{2\pi i/8})$ and study the
$L$-function of $y^2=x^5+x$ in Section \ref{subsection:
  hyperelliptic}. In Section \ref{subsection: number field}, we will
obtain the Hecke Gr\"ossencharakters associated to the cyclic
extension $L/\Q(e^{2\pi i/8})$.
Finally, we will give a proof of Theorem \ref{theorem: main} in the
last section.

\begin{subsection}{The elliptic curve $y^2=x^3+4x^2+2x$}
\label{subsection: elliptic curve}

\begin{Lemma} \label{lemma: L for E8} Let $E:y^2=x^3+4x^2+2x$. The
  $L$-function $L(E/\Q,s)$ is given by
$$
  \prod_{p\equiv1,3\mod 8}\frac1{1-2\epsilon_p a_pp^{-s}+p^{1-2s}}
  \prod_{p\equiv 5,7\mod 8}\frac1{1+p^{1-2s}},
$$
where $a_p$ and $b_p$ are positive integers such that $p=a_p^2+2b_p^2$
and
$$
  \epsilon_p=\begin{cases}
  2(-1)^{b_p/2}\JS{-2}{a_p}, &\text{if }p\equiv 1\mod 8, \\
 -2\JS{-2}{a_p}, &\text{if }p\equiv 3\mod 8.
  \end{cases}
$$
\end{Lemma}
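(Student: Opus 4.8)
The plan is to realize $L(E/\Q,s)$ as the $L$-function of a Hecke Gr\"ossencharakter on $K=\Q(\sqrt{-2})$ and then to evaluate that character on the primes of $K$. First I would check that $E$ has $j$-invariant $8000=20^3$ (computing $c_4=160$ and $\Delta=2^9$), so that $E$ has complex multiplication by the maximal order $\Z[\sqrt{-2}]=\mathcal{O}_K$, which has class number one; the endomorphism $[\sqrt{-2}]$ can be written down as an explicit isogeny. By Deuring's theorem (cited in the introduction) there is a Hecke Gr\"ossencharakter $\psi$ of $K$ with infinity type $\alpha\mapsto\alpha$ and conductor $\mathfrak{f}$ supported at the ramified prime $\mathfrak{p}_2=(\sqrt{-2})$ such that $L(E/\Q,s)=L(\psi,s)=\prod_{\mathfrak{q}}(1-\psi(\mathfrak{q})N\mathfrak{q}^{-s})^{-1}$. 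Because $|\psi(\mathfrak{q})|=N\mathfrak{q}^{1/2}$ and $\psi(\overline{\mathfrak{q}})=\overline{\psi(\mathfrak{q})}$, grouping the Euler factors over each rational prime $p\nmid2$ gives, for $p$ split, the local factor $1-2\,\mathrm{Re}\,\psi(\mathfrak{p})\,p^{-s}+p^{1-2s}$, and for $p$ inert the factor $1-\psi((p))p^{-2s}$.

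Next I would separate primes by their splitting type in $K$. Since $\JS{-2}{p}=1$ exactly when $p\equiv1,3\mod8$, those are precisely the split primes while $p\equiv5,7\mod8$ are inert, which already matches the two products in the statement. For inert $p$ the ideal $(p)$ is its own conjugate, so $\psi((p))=\pm p$; identifying the restriction to $\Z$ of the finite part of $\psi$ with the Kronecker character $\JS{-2}{\cdot}$ forces $\psi((p))=-p$ when $p\equiv5,7\mod8$, giving the factor $1+p^{1-2s}$ as claimed. This same identification pins $\psi((p))=+p$ on split primes, which is the consistency check $\psi(\mathfrak{p})\psi(\overline{\mathfrak{p}})=p$.

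The substance is the split case. Writing $\mathfrak{p}=(\pi)$ with $\pi=a_p+b_p\sqrt{-2}$ and $a_p,b_p>0$, the value $\psi(\mathfrak{p})$ lies in $\mathcal{O}_K$ and generates $\mathfrak{p}$, hence $\psi(\mathfrak{p})\in\{\pm\pi\}$ and $\mathrm{Re}\,\psi(\mathfrak{p})=\chi(\pi)a_p$ for a sign $\chi(\pi)\in\{\pm1\}$; the whole problem reduces to computing this quadratic sign as a function of $\pi$ modulo $\mathfrak{f}$. I would first compute the conductor of $E$ at $2$ by Tate's algorithm---I expect additive, potentially good reduction and conductor $2^k$---and translate it into the conductor $\mathfrak{f}=\mathfrak{p}_2^{\,m}$ of $\psi$ via $N(E)=8\,N_{K/\Q}(\mathfrak{f})$. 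Viewing the relevant finite character as a character of $(\mathcal{O}_K/\mathfrak{f})^\times$, I would determine it from the two constraints already found, namely $\chi(-1)=-1$ and $\chi|_{\Z}=\JS{-2}{\cdot}$, together with its value at one or two explicitly reduced primes, which fix it uniquely. Evaluating $\chi(a_p+b_p\sqrt{-2})$ by reducing modulo $\mathfrak{p}_2^{\,m}$ and applying quadratic reciprocity then produces $\JS{-2}{a_p}$ together with a contribution depending on the residue of $b_p$ modulo $4$; the dichotomy $b_p$ even ($p\equiv1\mod8$) versus $b_p$ odd ($p\equiv3\mod8$) is exactly the source of the two cases, and feeding $2\,\mathrm{Re}\,\psi(\mathfrak{p})=2\chi(\pi)a_p$ back into the split Euler factor determines $\epsilon_p$.

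I expect the one genuinely delicate step to be the $2$-adic analysis at the ramified prime: describing the finite character explicitly modulo a power of $\sqrt{-2}$ and evaluating it on $a_p+b_p\sqrt{-2}$. Everything else---the splitting law, the shape of the Euler factors, and the inert computation---is formal once Deuring's theorem is in hand; it is only the behaviour at $2$, where $\mathcal{O}_K$ is ramified and the relevant unit group is $2$-adically complicated, that requires care, and I would corroborate the resulting sign formula against a few small split primes such as $p=3,11,17$.
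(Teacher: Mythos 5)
Your proposal follows essentially the same route as the paper: invoke Deuring's theorem to identify $L(E/\Q,s)$ with the $L$-function of a Hecke Gr\"ossencharakter of $\Q(\sqrt{-2})$ of conductor $(\sqrt{-2})^5$ (the paper quotes the conductor $256$ from a table rather than running Tate's algorithm), and then determine the finite part of the character at the ramified prime. The paper simply exhibits that character explicitly, namely $\chi(a+b\sqrt{-2})=(-1)^{k+n}(a+b\sqrt{-2})$ where $a+b\sqrt{-2}\equiv(-1)^k3^m(1+\sqrt{-2})^n\pmod{(\sqrt{-2})^5}$, which is precisely the $2$-adic computation you correctly identify as the one delicate step.
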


\begin{proof} The elliptic curve $E:y^2=x^3+4x^2+2x$ has complex
  multiplication by $\Z[\sqrt{-2}]$ and its conductor is $256$. (See
  \cite[Page 483]{Silverman-adv}.) Thus, by a well-known result of
  Deuring, the $L$-function $L(E/\Q,s)$ is identical with the
  $L$-function $L(\chi,s)$ of a Hecke Gr\"ossencharakter $\chi$ of the
  field $\Q(\sqrt{-2})$ of conductor $(\sqrt{-2})^5$. (See Theorem
  II.10.5 and Corollary II.10.5.1 of \cite{Silverman-adv}.) It is not
  difficult to work out this Hecke character. Namely, for each
  $a+b\sqrt{-2}\in\Z[\sqrt{-2}]$, there are unique integers $k,m,n$
  with $0\le k,m<2$ and $0\le n<4$ such that
  $$
    a+b\sqrt{-2}\equiv (-1)^k3^m(1+\sqrt{-2})^n\mod(\sqrt{-2})^5.
  $$
  Then the Hecke character $\chi$ is defined by
  $$
    \chi(a+b\sqrt{-2})=(-1)^{k+n}(a+b\sqrt{-2}).
  $$
  (Note that since $\Q(\sqrt{-2})$ has class number one, we can define
  a Hecke character elementwise.) This character can be more
  succinctly written as
  $$
    \chi(a+b\sqrt{-2})=\JS{-2}a(a+b\sqrt{-2})\cdot
    \begin{cases} (-1)^{b/2}, &\text{if }b\text{ is even}, \\
    -1, &\text{if }b\text{ is odd},\end{cases}
  $$
  which yields the expression of $L(E/\Q,s)$ given in the statement of
  the lemma.
\end{proof}

\begin{Corollary} \label{corollary: a-part} We have
  $$
    \sum_{x=0}^{p-1}\JS{x^3+4x^2+2x}p=\begin{cases}
    \pm 2a, &\text{if }p\equiv 1,3\mod 8, \\
    0, &\text{if }p\equiv 5,7\mod 8,
    \end{cases}
  $$
  where $a$ and $b$ are integers such that $p=a^2+2b^2$.
\end{Corollary}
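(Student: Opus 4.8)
The plan is to deduce the corollary directly from Lemma~\ref{lemma: L for E8} by interpreting the Jacobsthal sum as a point count, exactly as was done for the curves $E_n$ in Section~\ref{section: AG to Jacobsthal}. First I would note that $E:y^2=x^3+4x^2+2x$ has good reduction at every odd prime (its conductor is $256=2^8$), and that for each $x\in\F_p$ the equation $y^2=x^3+4x^2+2x$ has exactly $1+\JS{x^3+4x^2+2x}p$ solutions $y\in\F_p$. Counting these together with the point at infinity gives
$$
  \#E(\F_p)=p+1+\sum_{x=0}^{p-1}\JS{x^3+4x^2+2x}p.
$$
Hence, writing $a_p=p+1-\#E(\F_p)$ for the trace of Frobenius, the Jacobsthal sum equals $-a_p$; equivalently, it is the coefficient of $p^{-s}$ in the reciprocal of the $p$-factor of $L(E/\Q,s)$, in the normalization of Section~\ref{section: AG to Jacobsthal}.

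It then remains only to read this coefficient off Lemma~\ref{lemma: L for E8}. For $p\equiv 5,7\mod 8$ the $p$-factor is $1/(1+p^{1-2s})$, whose reciprocal $1+p^{1-2s}$ has vanishing $p^{-s}$-coefficient; thus the sum is $0$, which is the second case. For $p\equiv 1,3\mod 8$ the prime $p$ splits in $\Q(\sqrt{-2})$ as $p=\pi\bar\pi$ with $\pi=a+b\sqrt{-2}$, and the Gr\"ossencharakter $\chi$ of Lemma~\ref{lemma: L for E8} satisfies $\chi(\pi)=\epsilon\pi$ for a sign $\epsilon=\pm1$ (the multiplier $\JS{-2}a$ times the $\pm1$ appearing in the succinct formula for $\chi$). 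Since the same computation gives $\chi(\bar\pi)=\epsilon\bar\pi$, the trace is $a_p=\chi(\pi)+\chi(\bar\pi)=\epsilon(\pi+\bar\pi)=2\epsilon a$. Therefore the Jacobsthal sum is $-a_p=-2\epsilon a=\pm2a$, giving the first case.

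I expect no serious obstacle here: the corollary is essentially a transcription of Lemma~\ref{lemma: L for E8} once the point-count identity is in place. The only point needing care is the bookkeeping of constants and signs---one must check that the contribution of a split prime is exactly $2a$ (and not some other multiple), which is forced both by $\pi+\bar\pi=2a$ and by the Hasse bound $|a_p|\le2\sqrt p$. Since the corollary asserts its value only up to sign, the precise determination of $\epsilon$ is not needed; it suffices that the $p^{-s}$-coefficient is $\pm2a$ in the split case and $0$ in the inert case.
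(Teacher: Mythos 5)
Your proposal is correct and follows exactly the route the paper intends: the corollary is stated without a separate proof precisely because it is the immediate transcription of Lemma~\ref{lemma: L for E8} via the point-count identity $\#E(\F_p)=p+1+\sum_x\JS{x^3+4x^2+2x}p$, which is the computation you carry out. The only wrinkle is a bookkeeping inconsistency in the paper's own statement of the lemma (the factor of $2$ appears both in the Euler factor $1-2\epsilon_p a_p p^{-s}+p^{1-2s}$ and in the displayed formula for $\epsilon_p$, which would contradict the Hasse bound); your Gr\"ossencharakter computation $\chi(\pi)+\chi(\bar\pi)=\pm2a$ correctly resolves this in favor of trace $\pm2a$.
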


\end{subsection}

\begin{subsection}{The hyperelliptic curve $y^2=x^5+x$}
\label{subsection: hyperelliptic}

\begin{Lemma} The two hyperelliptic curves $y^2=x^5+x$ and
  $y^2=x^6+4x^5+10x^4-20x^2-16x-8$ are isomorphic over the number
  field $\Q(\theta)$, where $\theta=2^{3/4}(\sqrt 2-1)^{3/4}$.
\end{Lemma}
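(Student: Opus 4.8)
The plan is to exhibit the isomorphism explicitly and then verify that every coefficient involved lies in $\Q(\theta)$. The starting point is the standard fact that an isomorphism between two genus-$2$ hyperelliptic curves is always a fractional linear change of the $x$-coordinate together with a compatible rescaling of $y$. Concretely, substituting
\[
  x\longmapsto\frac{\alpha x+\beta}{\gamma x+\delta},\qquad
  y\longmapsto\frac{e\,y}{(\gamma x+\delta)^3}
\]
into $y^2=x^5+x$ turns it into
\[
  y^2=\frac1{e^2}(\alpha x+\beta)(\gamma x+\delta)
  \bigl[(\alpha x+\beta)^4+(\gamma x+\delta)^4\bigr],
\]
a sextic in $x$. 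The aim is then to choose $\alpha,\beta,\gamma,\delta,e$ so that the right-hand side equals $x^6+4x^5+10x^4-20x^2-16x-8$.

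First I would factor the target sextic. A direct check shows that $\pm\sqrt2$ are roots, so
\[
  x^6+4x^5+10x^4-20x^2-16x-8=(x^2-2)(x^4+4x^3+12x^2+8x+4).
\]
This suggests matching the quadratic factor $x^2-2$ with the linear part $(\alpha x+\beta)(\gamma x+\delta)$ and the quartic factor with the bracket $(\alpha x+\beta)^4+(\gamma x+\delta)^4$. Accordingly I would set $\alpha x+\beta=\alpha(x-\sqrt2)$ and $\gamma x+\delta=\gamma(x+\sqrt2)$, so that the two simple branch points $\sqrt2,-\sqrt2$ of the sextic model are carried to the branch points $0,\infty$ of $y^2=x^5+x$.

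It then remains to force $\alpha^4(x-\sqrt2)^4+\gamma^4(x+\sqrt2)^4$ to be proportional to $x^4+4x^3+12x^2+8x+4$. Comparing the coefficients of $x^4$ and $x^3$ gives the single condition $(\alpha/\gamma)^4=(\sqrt2-1)^2$, and one checks that the remaining coefficients then match automatically; thus $\alpha/\gamma=(\sqrt2-1)^{1/2}$. Writing $\kappa=(\sqrt2-1)^{1/2}$ and normalizing $\gamma=1$, $\alpha=\kappa$, the requirement that the sextic be monic fixes the rescaling factor through $e^2=\kappa(\kappa^4+1)$. Here the main payoff appears: since $\kappa^4+1=4-2\sqrt2=2\sqrt2(\sqrt2-1)$, we get $e^2=2\sqrt2(\sqrt2-1)^{3/2}=2^{3/2}(\sqrt2-1)^{3/2}$, so that $e=2^{3/4}(\sqrt2-1)^{3/4}=\theta$ exactly.

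Finally I would confirm that the whole change of variables is defined over $\Q(\theta)$, which reduces to showing $\sqrt2,\kappa\in\Q(\theta)$: from $\theta^4=8(\sqrt2-1)^3=40\sqrt2-56$ one recovers $\sqrt2=(\theta^4+56)/40\in\Q(\theta)$, and from $\theta^2=2\kappa(2-\sqrt2)$ one recovers $\kappa=\theta^2/\bigl(2(2-\sqrt2)\bigr)\in\Q(\theta)$. I expect the main obstacle to be not any single step but the bookkeeping around the normalization: one must guess that $x^2-2$ plays the role of the two distinguished branch points, and one must track the rescaling carefully, since $e$ depends on the chosen representative of the projective transformation and only the choice $\gamma=1$, $\alpha=\kappa$ produces the clean value $e=\theta$.
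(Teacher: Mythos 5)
Your proposal is correct and follows essentially the same route as the paper: both factor the sextic as $(x^2-2)(x^4+4x^3+12x^2+8x+4)$ and use a fractional linear transformation carrying the branch points $\pm\sqrt2$ to $0$ and $\infty$, with the scaling factor landing exactly on $\theta$. Your explicit verification that $\sqrt2$ and $(\sqrt2-1)^{1/2}$ lie in $\Q(\theta)$ is a welcome addition that the paper leaves implicit, but the underlying argument is the same.
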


\begin{proof} Notice that the polynomial
$x^6+4x^5+10x^4-20x^2-16x-8$ factorizes as
$$
  (x^2-2)(x^4+4x^3+12x^2+8x+4).
$$
We make a linear transformation sending the root $\sqrt 2$ to
$\infty$ and the root $-\sqrt 2$ to $0$, i.e., setting
$$
  x=\frac{\sqrt{2}(x_1+1)}{x_1-1}, \qquad
  y=\frac{y_1}{(x_1-1)^3},
$$
we get
$$
  y_1^2=128(2+\sqrt 2)x_1(x_1^4+3-2\sqrt 2).
$$
Then let $x_1=\sqrt{\sqrt 2-1}x_2$, $y_1=y_2$, and obtain
$$
  y_2^2=128\sqrt 2(\sqrt2-1)^{3/2}(x_2^5+x_2).
$$
Finally, setting $x_2=x_3$ and $y_2=u^{1/2} y_3$, where
$u=128\sqrt2(\sqrt 2-1)^{3/2}$, we arrive at
$$
  y_3^2=x_3^5+x_3.
$$
This proves the lemma.
\end{proof}

\begin{Proposition} \label{proposition: zeta of X2}
  Let $X$ be the hyperelliptic curve $y^2=x^5+x$
  over $\Q$. Then we have
  $$
    L(X/\Q,s)=L(E_1/\Q,s)L(E_2/\Q,s),
  $$
  where $E_1$ and $E_2$ are the elliptic curves $y^2=x^3+4x^2+2x$ and
  $y^2=x^3-4x^2+2x$, respectively.
\end{Proposition}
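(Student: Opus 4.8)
The plan is to realize the genus-two curve $X:y^2=x^5+x$ as a bielliptic curve, that is, to exhibit two commuting involutions defined over $\Q$ whose quotients are exactly $E_1$ and $E_2$, and then to read off the factorization of $L(X/\Q,s)$ from the resulting splitting of the Jacobian up to isogeny. The guiding principle is that a genus-two curve carrying a non-hyperelliptic involution $\sigma$ has $\mathrm{Jac}(X)\sim (X/\sigma)\times(X/\iota\sigma)$, where $\iota$ is the hyperelliptic involution.

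First I would check that $\sigma\colon(x,y)\mapsto(1/x,\,y/x^3)$ and $\sigma'\colon(x,y)\mapsto(1/x,\,-y/x^3)$ are involutions defined over $\Q$ (this uses $f(1/x)=f(x)/x^6$ for $f(x)=x^5+x$), and that $\sigma\sigma'=\iota\colon(x,y)\mapsto(x,-y)$, so that $\{1,\iota,\sigma,\sigma'\}$ is a Klein four-group of automorphisms. A Riemann--Hurwitz count (each of $\sigma,\sigma'$ fixes exactly two points, while $\iota$ fixes the six Weierstrass points) shows that $X/\sigma$ and $X/\sigma'$ have genus one, whereas $X/\iota$ is rational.

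Next I would compute the two elliptic quotients explicitly. Using the $\sigma$-invariants $s=x+1/x$ and $t=y(x^3+1)/x^3$, together with $x^2+x^{-2}=s^2-2$ and $(x^3+1)^2/x^3=s^3-3s+2=(s-1)^2(s+2)$, one finds $t^2=(s^2-2)(s-1)^2(s+2)$; writing $t=(s-1)t'$ and applying the substitution $s\mapsto s-2$ yields the Weierstrass equation $t'^2=s^3-4s^2+2s$, so $X/\sigma\cong E_2$ over $\Q$. Likewise, from the $\sigma'$-invariants $s=x+1/x$ and $t=y(x^3-1)/x^3$ one gets $(x^3-1)^2/x^3=s^3-3s-2=(s+1)^2(s-2)$, hence $t^2=(s^2-2)(s+1)^2(s-2)$, and after $t=(s+1)t'$ and $s\mapsto s+2$ this becomes $t'^2=s^3+4s^2+2s$, so $X/\sigma'\cong E_1$ over $\Q$. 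Both quotients have the rational point at infinity, so they are genuine elliptic curves over $\Q$.

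Finally, the two degree-two maps $X\to E_1$ and $X\to E_2$ induce by pullback a homomorphism $E_1\times E_2\to\mathrm{Jac}(X)$ of abelian surfaces over $\Q$. To confirm it is an isogeny I would examine the action of $\sigma$ on the holomorphic differentials $\langle dx/y,\ x\,dx/y\rangle$: a direct computation gives $\sigma^*(dx/y)=-x\,dx/y$ and $\sigma^*(x\,dx/y)=-dx/y$, so the eigenlines spanned by $dx/y\mp x\,dx/y$ descend separately to $E_2$ and $E_1$; thus the induced map is nonzero on each tangent eigenline and has finite kernel. Therefore $\mathrm{Jac}(X)$ is isogenous over $\Q$ to $E_1\times E_2$, and since isogenous abelian varieties have isomorphic rational $\ell$-adic Tate modules as Galois representations, $L(X/\Q,s)=L(\mathrm{Jac}(X)/\Q,s)=L(E_1/\Q,s)L(E_2/\Q,s)$. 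The main obstacle is precisely this last step: upgrading the two quotient maps to an honest isogeny $\mathrm{Jac}(X)\sim E_1\times E_2$ with finite kernel, so that \emph{all} of $\mathrm{Jac}(X)$ is accounted for and the full Euler factors (not merely the Frobenius traces) match. This is what the differential computation above supplies, and it can alternatively be obtained from the Kani--Rosen decomposition theorem applied to the Klein four-group $\{1,\iota,\sigma,\sigma'\}$.
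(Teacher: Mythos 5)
Your proof is correct, and its geometric core coincides with the paper's: your two quotient maps are, after unwinding the substitutions, exactly the paper's maps $X=(x-1)^2/x$, $Y=y(x-1)/x^2$ (note $s-2=x+1/x-2=(x-1)^2/x$) and $X=(x+1)^2/x$, $Y=y(x+1)/x^2$; the paper simply writes these down directly from the factorization $x^5+x=x((x\mp1)^4\pm4x(x\mp1)^2+2x^2)$ rather than deriving them as quotients by the involutions $(x,y)\mapsto(1/x,\pm y/x^3)$. Where you diverge is the final step. The paper argues representation-theoretically: $\rho_{E_1,\ell}$ and $\rho_{E_2,\ell}$ are nonisomorphic absolutely irreducible subrepresentations of the four-dimensional $\rho_{X,\ell}$, so a dimension count forces $\rho_{X,\ell}=\rho_{E_1,\ell}\oplus\rho_{E_2,\ell}$. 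You instead prove the isogeny $\mathrm{Jac}(X)\sim E_1\times E_2$ over $\Q$ by checking that the pullbacks of the two elliptic differentials span the two distinct eigenlines of $\sigma^*$ on $H^0(X,\Omega^1)$ (or by citing Kani--Rosen for the Klein four-group $\{1,\iota,\sigma,\sigma'\}$). Your route is more robust in that it needs no irreducibility or non-isomorphism input --- it works for any bielliptic genus-two curve --- whereas the paper's route is shorter here because irreducibility and non-isomorphism come for free from the CM structure ($E_2$ is the twist of $E_1$ by $-1$, and a CM representation is not isomorphic to its twist by a quadratic character cutting out a field other than the CM field). Both yield the same conclusion; either is acceptable.
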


\begin{proof} We have
$$
  x^5+x=x((x-1)^4+4x(x-1)^2+2x^2).
$$
Thus, letting
$$
  X=\frac{(x-1)^2}x, \qquad Y=\frac{y(x-1)}{x^2},
$$
we find that $X$ and $Y$ satisfy $Y^2=X^3+4X^2+2X$. In other words,
there is a $2$-to-$1$ morphism from $X$ to $E_1$ defined over $\Q$ and
hence the $\text{Gal}(\overline{\Q}/\Q)$-representation
$\rho_{E_1,\ell}$ associated to $E_1$ is a subrepresentation of
$\rho_{X,\ell}$ associated to $X$. Similarly, setting $X=(x+1)^2/x$
and $Y=y(x+1)/x^2$, we get a morphism from $X$ to $E_2$ and conclude
that $\rho_{E_1,\ell}$ is also a
$\Gal(\overline{\Q}/\Q)$-subrepresentation of $\rho_{X,\ell}$.
Since $\rho_{E_1,\ell}$ and $\rho_{E_2,\ell}$ are nonisomorphic
absolutely irreducible $\text{Gal}(\overline{\Q}/\Q)$-representations
and $\dim_{\Q_\ell}\rho_{E_1,\ell}+\dim_{\Q_\ell} \rho_{E_2,\ell}=\dim_{\Q_\ell}
\rho_{X,\ell}$, we have
$$
  \rho_{X,\ell}=\rho_{E_1,\ell}\oplus\rho_{E_2,\ell}
$$
and
$$
  L(X/\Q,s)=L(E_1/\Q,s)L(E_2/\Q,s).
$$
This proves the proposition.
\end{proof}

\begin{Corollary} \label{corollary: L for X2}
  Let the curve $X:y^2=x^5+x$ be given as above. Let
  $$
    \frac1{(1-\alpha_{p,1}p^{-s})\ldots(1-\alpha_{p,4}p^{-s})}
  $$
  be the $p$-factor of $L(X/\Q,s)$.
  \begin{enumerate}
  \item If $p\equiv 1\mod 8$, then
  $$
    \alpha_{p,j}=\JS{-2}a(-1)^{b/2}(a\pm b\sqrt{-2}),
  $$
  each with multiplicity $2$, where $a$ and $b$ are the positive
  integers such that $p=a^2+2b^2$.
  \item If $p\equiv 3\mod 8$, then $\alpha_{p,j}=\pm a\pm b\sqrt{-2}$,
    where $a$ and $b$ are integers such that $p=a^2+2b^2$.
  \item If $p\equiv 5,7\mod 8$, then $\alpha_{p,j}=\pm i\sqrt p$, each
    with multiplicity $2$.
  \end{enumerate}
\end{Corollary}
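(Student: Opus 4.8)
The plan is to deduce the corollary from Proposition \ref{proposition: zeta of X2}, which already reduces everything to the arithmetic of the two elliptic curves $E_1:y^2=x^3+4x^2+2x$ and $E_2:y^2=x^3-4x^2+2x$. Since $L(X/\Q,s)=L(E_1/\Q,s)L(E_2/\Q,s)$, the four numbers $\alpha_{p,1},\dots,\alpha_{p,4}$ are exactly the union, with multiplicity, of the two Frobenius eigenvalues at $p$ for $E_1$ and the two for $E_2$. So it suffices to identify these eigenvalues in each residue class of $p$ modulo $8$, and then overlay the two contributions.

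The first step handles $E_1$ directly via Lemma \ref{lemma: L for E8}. Before substituting into the explicit Hecke character $\chi$, I would record the parities forced by $p=a^2+2b^2$: reducing modulo $8$ shows that $a$ is odd and $b$ even when $p\equiv1\mod 8$, while $a$ and $b$ are both odd when $p\equiv3\mod 8$. Feeding these parities into the formula for $\chi$ gives the pair of Frobenius eigenvalues of $E_1$ as $\JS{-2}a(-1)^{b/2}(a\pm b\sqrt{-2})$ for $p\equiv1\mod 8$ and as $-\JS{-2}a(a\pm b\sqrt{-2})$ for $p\equiv3\mod 8$ (these being $\chi(\pi)$ and $\chi(\bar\pi)$ for the two primes $\pi=a+b\sqrt{-2}$, $\bar\pi=a-b\sqrt{-2}$ above $p$). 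For $p\equiv5,7\mod 8$ the prime is inert, the $p$-factor of $E_1$ is $1+p^{1-2s}$, and the eigenvalues are $\pm i\sqrt p$.

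The second step supplies $E_2$, which Lemma \ref{lemma: L for E8} does not cover. The key observation is that $E_2$ is the quadratic twist of $E_1$ by $-1$: replacing $x$ by $-x$ in the equation of $E_2$ and clearing signs exhibits an isomorphism $E_2\cong E_1^{(-1)}$ over $\Q(\sqrt{-1})$. Hence $a_p(E_2)=\JS{-1}p\,a_p(E_1)$, so the Frobenius eigenvalues of $E_2$ agree with those of $E_1$ when $\JS{-1}p=1$ (that is, $p\equiv1,5\mod 8$) and are their negatives when $\JS{-1}p=-1$ (that is, $p\equiv3,7\mod 8$); in either case the product of the two eigenvalues remains $p$. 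Overlaying the two curves now finishes each case: for $p\equiv1\mod 8$ the twist is trivial, so each of $\JS{-2}a(-1)^{b/2}(a\pm b\sqrt{-2})$ occurs with multiplicity $2$; for $p\equiv3\mod 8$ the twist negates the $E_1$-eigenvalues, yielding $\pm\JS{-2}a(a\pm b\sqrt{-2})$, which, since $\JS{-2}a=\pm1$, are precisely the four sign combinations $\pm a\pm b\sqrt{-2}$; and for $p\equiv5,7\mod 8$ the set $\{\pm i\sqrt p\}$ is stable under negation, so $\pm i\sqrt p$ each occur with multiplicity $2$.

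The parity reductions and the twist identity are routine. I expect the only genuinely delicate point to be the sign bookkeeping in the $p\equiv1\mod 8$ case: one must confirm that the factor $\JS{-2}a(-1)^{b/2}$ produced by the Hecke character is the correct normalization coming out of Lemma \ref{lemma: L for E8}, and check it is consistent with the two eigenvalues having product $p$ and absolute value $\sqrt p=\sqrt{a^2+2b^2}$. Tracking this sign correctly through the twist, rather than identifying the eigenvalues up to the Weil bound, is where care is needed.
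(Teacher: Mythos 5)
Your proposal is correct and follows exactly the paper's own (very brief) proof: decompose $L(X/\Q,s)$ via Proposition \ref{proposition: zeta of X2}, read off the $E_1$-eigenvalues from Lemma \ref{lemma: L for E8}, and handle $E_2$ as the quadratic twist of $E_1$ by $-1$. The parity analysis of $a,b$ modulo $8$ and the sign bookkeeping through the twist are carried out correctly, so nothing further is needed.
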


\begin{proof} This corollary follows immediately from Proposition
  \ref{proposition: zeta of X2}, Lemma \ref{lemma: L for E8}, and the
  fact that $y^2=x^3-4x^2+2x$ is a quadratic twist of
  $y^2=x^3+4x^2+2x$ by $-1$, i.e. it is isomorphic to
  $-y^2=x^3+4x^2+2x$ over $\Q(i)$.
\end{proof}
\end{subsection}

\begin{subsection}{The number field $\Q(\theta,i)$}
\label{subsection: number field}

Let $\theta=2^{3/4}(\sqrt2-1)^{3/4}$. In the previous section, we have
seen that the two hyperelliptic curves $y^2=x^5+x$ and
$y^2=x^6+4x^5+10x^4-20x^2-16x-8$ are isomorphic over $\Q(\theta)$.
As in the case of Theorem \ref{theorem: Jacobsthal}, the field
$\Q(\theta)$ is not an abelian extension of $\Q$, so in order to apply
Lemma \ref{lemma: representation in cyclic}, we change the base field
from $\Q$ to $\Q(\zeta_8)$, where $\zeta_8=e^{2\pi i/8}$ so that
$\Q(\theta,\zeta_8)$ is Galois over $\Q(\zeta_8)$. From now on,
we let
$$
  K=\Q(\zeta_8), \qquad L=\Q(\theta,\zeta_8)=\Q(\theta,i).
$$

\begin{Lemma} The field $L=\Q(\theta,i)$ is a Kummer extension of
  $K=\Q(\zeta_8)$ obtained by adjoining the fourth root of $i(\sqrt
  2-1)$ to $\Q(\zeta_8)$. The field extension $L/K$ is unramified
  outside of the place $1-\zeta_8$.
\end{Lemma}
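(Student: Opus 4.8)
The plan is to run everything through Kummer theory over $K=\Q(\zeta_8)$. Since $i=\zeta_4\in K$, every cyclic quartic extension of $K$ has the form $K(c^{1/4})$ and is determined by the cyclic subgroup $\langle c\rangle$ it generates in $K^\times/(K^\times)^4$; the target identity is then a statement comparing two such subgroups, and the ramification statement reduces to a local computation with $x^4-c$.

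First I would get rid of the fractional exponent in $\theta=2^{3/4}(\sqrt2-1)^{3/4}$. Writing $\theta=\rho^3$ with $\rho=(2(\sqrt2-1))^{1/4}$ and using $\gcd(3,4)=1$ (so $\rho=\theta^3(\rho^4)^{-2}\in K(\theta)$), together with $L=\Q(\theta,\zeta_8)=K(\theta)$, gives
$$
  L=K(\rho),\qquad \rho^4=2(\sqrt2-1)\in K.
$$
As $[\Q(\theta):\Q]=8$ and $i\notin\Q(\theta)\subset\R$, we have $[L:K]=4$, so $L/K$ is cyclic quartic, hence Kummer.

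Next I would pin down the Kummer generator. Put $v=i(\sqrt2-1)$. By the correspondence above, $L=K(v^{1/4})$ is equivalent to $\langle 2(\sqrt2-1)\rangle=\langle v\rangle$ in $K^\times/(K^\times)^4$. The naive ratio $2(\sqrt2-1)/v=-2i$ is a square but not a fourth power in $K$, so the generators do not agree directly; instead I would test the relation $2(\sqrt2-1)\equiv v^3\pmod{(K^\times)^4}$. A short computation gives
$$
  \frac{2(\sqrt2-1)}{v^3}=\frac{2(\sqrt2-1)}{-i(\sqrt2-1)^3}=2i(\sqrt2+1)^2=\bigl((1+i)(\sqrt2+1)\bigr)^2,
$$
so everything comes down to exhibiting a square root of $(1+i)(\sqrt2+1)$ in $K$. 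This is the main obstacle: one must produce the element explicitly. Solving $p^2-q^2=2pq=\sqrt2+1$ over $\Q(\sqrt2)$ yields $\gamma_0=\tfrac{2+\sqrt2}2+\tfrac{\sqrt2}2\,i\in K$ with $\gamma_0^2=(1+i)(\sqrt2+1)$, whence $2(\sqrt2-1)/v^3=\gamma_0^4\in(K^\times)^4$. Therefore $2(\sqrt2-1)=v^3\gamma_0^4$, so $(\rho/\gamma_0)^4=v^3$ and $K(\rho)=K((v^3)^{1/4})=K(v^{1/4})$, proving $L=K\bigl((i(\sqrt2-1))^{1/4}\bigr)$.

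For the ramification assertion I would note that $v=i(\sqrt2-1)$ is a unit of $\mathcal O_K$, since $i$ is a root of unity and $\sqrt2-1=(1+\sqrt2)^{-1}$ is a unit of $\Z[\sqrt2]\subset\mathcal O_K$. For a Kummer extension $K(v^{1/4})$ with $v\in\mathcal O_K^\times$, at any prime $\mathfrak q\nmid 2$ the polynomial $x^4-v$ reduces modulo $\mathfrak q$ to a separable polynomial over the residue field (its derivative $4x^3$ is nonzero and $v$ is a unit), so $\mathfrak q$ is unramified in $L$. Since $2$ is totally ramified in $K=\Q(\zeta_8)$ with $(2)=(1-\zeta_8)^4$, the unique prime of $K$ above $2$ is $\fp=(1-\zeta_8)$, and $K$ is totally complex so there are no real places to consider. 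Hence $L/K$ is unramified outside $1-\zeta_8$, as claimed.
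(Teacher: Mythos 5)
Your proof is correct and follows essentially the same strategy as the paper: exhibit an explicit element of $L$ whose fourth power is a power of $i(\sqrt2-1)$ coprime to $4$, then observe that $i(\sqrt2-1)$ is a unit so only the prime above $2$ can ramify. The paper reaches the Kummer generator more directly via the identity $(1-\zeta_8)^4=2i(\sqrt2-1)^2$, which gives $(\theta/(1-\zeta_8)^3)^4=(i(\sqrt2-1))^{-3}$ in one line, whereas you arrive at the equivalent relation through $\gamma_0=1+\zeta_8$ satisfying $\gamma_0^2=(1+i)(1+\sqrt2)$; both computations are valid.
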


\begin{proof} We have $(1-\zeta_8)^4=2(\sqrt 2-1)^2i$. Thus,
  $(\theta/(1-\zeta_8)^3)^4=((\sqrt 2-1)i)^{-3}$ and
  $\Q(\theta,i)=\Q(\sqrt[4]{i(\sqrt 2-1)},\zeta_8)$. Now
  $i(\sqrt 2-1)$ is a unit in $\Z[\zeta_8]$.
  Hence, the only place at which $L/K$ can possibly be ramified is the
  prime $1-\zeta_8$ lying over $2$. This proves the lemma.
\end{proof}

The main purpose of this section is to determine the 
characters of $\Gal(\overline \Q/K)$ associated to the abelian
extension $L/K$. From now on, we set
$$
  \eta=\sqrt[4]{i(\sqrt 2-1)}.
$$
(Since $L/K$ is a Kummer extension, it does not matter which fourth
root of $i(\sqrt 2-1)$ we take.) The Galois group of $L$ over $K$ is
cyclic and generated by
$$
  \sigma:\eta\mapsto i\eta.
$$
For each prime ideal $\fp$ of $\Q(\zeta_8)$ relatively prime to
$1-\zeta_8$, the Artin symbol $\left(\frac{L/K}\fp\right)$ is defined
as the unique Galois element $\sigma^j$ such that
\begin{equation} \label{equation: Artin symbol}
  \sigma^j(\eta)\equiv\eta^{\Nm(\fp)}\mod\fp,
\end{equation}
where $\Nm(\fp)$ denotes the norm of $\fp$. Then the 
characters associated to the field extension $L/K$ are defined by
\begin{equation} \label{equation: chi k}
  \chi_k(\fp)=i^{jk}, \qquad k=0,\ldots,3,
\end{equation}
where $j$ is the integer in \eqref{equation: Artin
  symbol}.

\begin{Lemma} \label{lemma: cases mod 8}
  Let $k=1,2,3$.
  \begin{enumerate}
  \item If $\fp$ is a prime of $\Q(\zeta_8)$ lying over a prime $p$
    congruent to $3$ modulo $8$, then $\chi_k(\fp)=i^k$ or $i^{-k}$.
  \item If $\fp$ is a prime of $\Q(\zeta_8)$ lying over a prime $p$
    congruent to $5$ or $7$ modulo $8$, then $\chi_k(\fp)=1$ for all $k$.
  \end{enumerate}
\end{Lemma}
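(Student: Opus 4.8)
The plan is to reduce the computation of $\chi_k(\fp)$ to an explicit quartic power-residue calculation for the unit $\alpha:=\eta^4=i(\sqrt2-1)$, and then to evaluate that residue by exploiting the Frobenius automorphism of $K/\Q$ in each congruence class. Since $\chi_k(\fp)=\chi_1(\fp)^k$ by \eqref{equation: chi k}, it suffices to pin down $\chi_1(\fp)=i^j$, i.e.\ the integer $j\bmod 4$. First I would rewrite the Artin symbol: because $\sigma^j(\eta)=i^j\eta$, relation \eqref{equation: Artin symbol} reads $i^j\eta\equiv\eta^{\Nm(\fp)}\pmod\fP$ for a prime $\fP$ of $L$ above $\fp$. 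As $\eta^4=\alpha$ is a unit, $\eta$ is invertible modulo $\fP$, so dividing gives
$$
  i^j\equiv\eta^{\Nm(\fp)-1}=\alpha^{(\Nm(\fp)-1)/4}\pmod\fP,
$$
where the exponent is an integer since $i\in K$ forces $4\mid\Nm(\fp)-1$. Both sides lie in $\mathcal O_K$ and the fourth roots of unity stay distinct modulo $\fp$ (as $p$ is odd), so this congruence determines $j\bmod 4$ from the value of $\alpha^{(\Nm(\fp)-1)/4}\bmod\fp$.

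For every $p\equiv3,5,7\mod 8$ the Frobenius $\sigma_p\in\Gal(K/\Q)$ (with $\zeta_8\mapsto\zeta_8^p$) has order $2$, since each nonidentity element of $\Gal(K/\Q)\cong(\Z/8\Z)^\times$ has order $2$; hence $\fp$ has residue degree $2$ and $\Nm(\fp)=p^2$. The key point is that $\sigma_p$ is characterized by $\sigma_p(x)\equiv x^p\pmod\fp$ for $x\in\mathcal O_K$, which lets me compute $\alpha^{p\pm1}\bmod\fp$ from the algebraic conjugates of $\alpha$.

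Next I would compute those conjugates directly, using $i=\zeta_8^2$ and $\sqrt2=\zeta_8+\zeta_8^{-1}$: this gives $\sigma_3(\alpha)=i(\sqrt2+1)$, $\sigma_5(\alpha)=-i(\sqrt2+1)$, and $\sigma_7(\alpha)=-\alpha$. The three cases then follow. For $p\equiv3$, $\alpha^{p+1}\equiv\alpha\,\sigma_3(\alpha)=i^2(\sqrt2-1)(\sqrt2+1)=-1$, so $\alpha^{(p^2-1)/2}=(-1)^{(p-1)/2}=-1$ because $(p-1)/2$ is odd; thus $\alpha^{(p^2-1)/4}$ is a primitive fourth root of unity, $j$ is odd, and $\chi_k(\fp)=i^k$ or $i^{-k}$, proving (1). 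For $p\equiv5$, $\alpha^{p+1}\equiv\alpha\,\sigma_5(\alpha)=-i^2(\sqrt2-1)(\sqrt2+1)=1$, and since $4\mid p-1$ we get $\alpha^{(p^2-1)/4}=(\alpha^{p+1})^{(p-1)/4}=1$, so $j\equiv0$ and $\chi_k(\fp)=1$. For $p\equiv7$, $\alpha^{p-1}\equiv\sigma_7(\alpha)/\alpha=-1$, and since $8\mid p+1$ the exponent $(p+1)/4$ is even, whence $\alpha^{(p^2-1)/4}=(\alpha^{p-1})^{(p+1)/4}=1$, again giving $\chi_k(\fp)=1$; together this proves (2).

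The conceptual content is entirely in the first paragraph; the case checks are short finite computations. The main obstacle I anticipate is bookkeeping rather than depth: I must verify that the factorization of $(\Nm(\fp)-1)/4=(p^2-1)/4$ into $(\alpha^{p+1})^\ast$ or $(\alpha^{p-1})^\ast$ uses an \emph{integer} exponent of the correct parity in each residue class — which is precisely why $p\equiv5$ is handled through the factor $p+1$ and $p\equiv7$ through $p-1$ — and that the reduction step is legitimate (namely that $\eta$ is a unit modulo $\fP$ and that the fourth roots of unity remain distinct modulo $\fp$). Once these routine points are secured, parts (1) and (2) follow at once.
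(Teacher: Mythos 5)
Your proof is correct and follows essentially the same route as the paper: both reduce $\chi_k(\fp)$ to the value of $(i(\sqrt2-1))^{(\Nm(\fp)-1)/4}$ modulo $\fp$ and evaluate it via the order-two Frobenius in each residue class, factoring the exponent through $p+1$ or $p-1$ as appropriate. The only cosmetic difference is that you keep the unit $i(\sqrt2-1)$ intact and use its explicit Galois conjugates, whereas the paper handles the factors $i$ and $\sqrt2-1$ separately.
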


\begin{proof}
  A prime $\fp$ lying over a prime $p$ congruent to $3$, $5$, or $7$
  modulo $8$ has norm $p^2$. In the case $p\equiv3\mod 8$, notice
  that
  $$
    (\sqrt 2-1)^{p+1}\equiv(-\sqrt2-1)(\sqrt 2-1)=-1 \mod p,
  $$
  which implies that
  $$
    (\sqrt2-1)^{(p^2-1)/2}\equiv-1\mod p.
  $$
  It follows that
  $$
    \eta^{\Nm(\fp)-1}=(i(\sqrt 2-1))^{(p^2-1)/4}\equiv\pm i\mod\fp,
  $$
  and $\chi_k(\fp)=\pm i^k$.

  If $p\equiv 5\mod 8$, a similar argument shows that
  $$
    (\sqrt2-1)^{(p^2-1)/4}=\left((\sqrt2-1)^{p+1}\right)^{(p-1)/4}
    \equiv(-1)^{(p-1)/4}=-1 \mod \fp
  $$
  and
  $$
    \eta^{\Nm(\fp)-1}=(i(\sqrt2-1))^{(p^2-1)/4}\equiv 1\mod \fp,
  $$
  which implies $\chi_k(\fp)=1$ for all $k=0,\ldots,3$.

  If $p\equiv 7\mod 8$, we have $\sqrt 2\equiv u\mod p$ for some
  $u\in\Z$. Then
  $$
    (\sqrt 2-1)^{(p^2-1)/4}\equiv\left((u-1)^{p-1}\right)^{(p+1)/4}
     \equiv1 \mod \fp
  $$
  and
  $$
    \eta^{\Nm(\fp)-1}=(i(\sqrt2-1))^{(p^2-1)/4}\equiv 1\mod \fp.
  $$
  Again, this gives us $\chi_k(\fp)=1$ for all $k$. This proves the
  lemma.
\end{proof}

\end{subsection}

\begin{subsection}{Proof of Theorem \ref{theorem: main}}
\label{subsection: proof}

We now prove Theorem \ref{theorem: main}. As before, we let
$$
  K=\Q(\zeta_8), \qquad L=\Q(\theta,\zeta_8),
$$
where $\theta=2^{3/4}(\sqrt 2-1)^{3/4}$. For a given number field $F$,
denote $\Gal(\overline{\Q}/F)$ by $G_F$ for convenience.

The cases $p\equiv 1\mod 8$ can be proved in a similar way as
Theorem \ref{theorem: Jacobsthal}. For instance, one can utilize
Theorem 6.2.3 of \cite{Berndt-Evans-Williams} to conclude that
$$
  \sum_{x=0}^{p-1}\JS{x^5+n x}p=\pm 4b,
$$
provided that $p$ is a prime congruent to $1$ modulo $8$, $n$ is
a quadratic nonresidue modulo $p$, and $a$ and $b$ are integers such
that $p=a^2+2b^2$. Then from Corollary \ref{corollary: a-part}, we get
the claimed identity. Alternatively, one can also follow the argument
in Section \ref{section: AG to Jacobsthal} to get the same
conclusion. We shall not give details here.

Now consider the two hyperelliptic curves $X_1:y^2=x^5+x$ and
$X_2:y^2=x^6+4x^5+10x^4-20x^2-16x-8$. Assume that the $p$-factors of
the $L$-functions of $X_1/\Q$ and $X_2/\Q$ are
$$
  \frac1{(1-\alpha_{p,1}p^{-s})\ldots(1-\alpha_{p,4}p^{-s})}, \qquad
  \frac1{(1-\beta_{p,1}p^{-s})\ldots(1-\beta_{p,4}p^{-s})},
$$
respectively. Then for $p\not\equiv 1\mod 8$, the $p$-factors of the
$L$-functions of $X_1/K$ and $X_2/K$ are
$$
  \frac1{(1-\alpha_{p,1}^2p^{-2s})^2\ldots(1-\alpha_{p,4}^2p^{-2s})^2}, \qquad
  \frac1{(1-\beta_{p,1}^2p^{-2s})^2\ldots(1-\beta_{p,4}^2p^{-2s})^2},
$$
respectively.

Counting the numbers of points on $X_2(\F_{3^n})$, we find that the
$3$-factor of $L(X_2/\Q,s)$ is
$$
  (1+4\cdot3^{-s}+8\cdot3^{-2s}+12\cdot3^{-3s}+9\cdot3^{-4s})^{-1},
$$
which implies that
$$
  \beta_{3,j}=\zeta_8(1+\sqrt{-2}),\ \zeta_8^3(1+\sqrt{-2}),\
  \zeta_8^5(1-\sqrt{-2}),\ \zeta_8^7(1-\sqrt{-2}),
$$
and
\begin{equation} \label{equation: beta}
  \beta_{3,j}^2=\pm i(1+\sqrt{-2})^2,\ \pm i(1-\sqrt{-2})^2.
\end{equation}
On the other hand, from Corollary \ref{corollary: L for X2}, we know
that
\begin{equation} \label{equation: alpha}
  \alpha_{3,j}^2=(1\pm \sqrt{-2})^2,
\end{equation}
each with multiplicity $2$.

From the above data, we proceed to determine the structure of the
semisimplification of $\rho_{X_2,\ell}|_{G_K}$ from
$\rho_{X_1,\ell}|_{G_K}$ and consider them as representations over
$\overline{\Q}_\ell$. By the discussion in Section 
\ref{subsection: hyperelliptic}, we know
$\rho_{X_1,\ell}=\rho_{E_1,\ell}\oplus\rho_{E_2,\ell}$ where
$\rho_{E_1,\ell}|_{G_K}\cong \rho_{E_2,\ell}|_{G_K}$. As $\sqrt{-2}\in
K$, $\rho_{E_1,\ell}|_{G_K}=\sigma\oplus\bar \sigma$ for some
one-dimensional representation $\sigma$ of $G_K$ and its complex
conjugate. Moreover, the restriction $\sigma|_{G_L}$ of $\sigma$  to
$G_L$ is not isomorphic to $\bar \sigma|_{G_L}$. Since
$\rho_{X_1,\ell}|_{G_L}\cong \rho_{X_2,\ell}|_{G_L}$, we know
$$
  \rho_{X_2,\ell}|_{G_L}=\sigma|_{G_L}\oplus\sigma|_{G_L}\oplus \bar
  \sigma|_{G_L}\oplus \bar \sigma|_{G_L}.
$$
Let $(\rho_{X_1,\ell}|_{G_K})^{ss}$ be  the semisimplification of
$\rho_{X_1,\ell}|_{G_K}$.  Since $\Gal(L/K)\cong \Z/4\Z$, by Lemma
\ref{lemma: representation in cyclic}, each $G_K$ irreducible
component of $(\rho_{X_1,\ell}|_{G_K})^{ss}$ is either isomorphic to
$\sigma$ or $\bar \sigma$ up to at most a character of
$G_K$ whose kernel contains $G_L$. Thus we may write
$$
  (\rho_{X_1,\ell}|_{G_K})^{ss}=(\sigma\otimes\phi_1)\oplus
  (\sigma\otimes\phi_2) \oplus (\bar \sigma\otimes\phi_3) \oplus (\bar
  \sigma\otimes\phi_4),
$$
for some  characters $\phi_i$ of $G_K$. Combined with the
above data at $p=3$, we conclude that $\phi_i$ has order 4,
$\phi_3=\phi_1$, and  $\phi_2=\phi_4$. Without loss of generality we
may assume
\begin{equation}\label{equation: alpha beta}
  \phi=\chi_1, \quad \phi^{-1}=\chi_3,
\end{equation}
where $\chi_1,\chi_3$ are defined in \eqref{equation: chi k}. In
summary $$(\rho_{X_1,\ell}|_{G_K})^{ss}=(\sigma\otimes\chi_1)\oplus
(\sigma\otimes\chi_3) \oplus (\bar \sigma\otimes\chi_1) \oplus (\bar
\sigma\otimes\chi_3).$$

Now we turn our attention to general primes $p$ that are congruent to
$3$ modulo $8$. For such a prime $p$, we have $p=a_p^2+2b_p^2$ for some
integers $a_p$ and $b_p$. From Corollary \ref{corollary: L for X2}, we
know that
$$
  \alpha_{p,j}^2=(a_p\pm b_p\sqrt{-2})^2,
$$
each with multiplicity $2$. Then by Lemma \ref{lemma: cases mod 8},
regardless of which $\fp$ lying over $p$,
 we have $\chi_k(\fp)=\pm i$. Thus, $\beta_{p,k_p}^2$ is
equal to one of the numbers
$$
  \pm i(a_p\pm b_p\sqrt{-2})^2,
$$
and consequently, $\beta_{p,k_p}$ is one of
$$
  \zeta_8^m(a_p\pm b_p\sqrt{-2}), \quad m=1,3,5,7.
$$
Because $\beta_{p,k}$, $k=1,\ldots,4$, are Galois conjugates over
$\Q$, we conclude that the $p$-factor of $L(X_2/\Q,s)$ is equal to one
of
$$
  \frac1{1\pm 4b_pp^{-s}+8b_p^2p^{-2s}\pm 4b_pp^{1-3s}+p^{2-4s}}.
$$
In other words, we have
$$
  \# X_2(\F_p)=p+1\pm 4b_p.
$$
On the other hand, because the polynomial $x^6+4x^5+10x^4-20x^2-16x-8$
has an even degree and its leading coefficient is a square, we have
$$
  \# X_2(\F_p)=p+2+\sum_{x=0}^{p-1}\JS{x^6+4x^5+10x^4-20x^2-16x-8}p.
$$
Therefore,
$$
  1+\sum_{x=0}^{p-1}\JS{x^6+4x^5+10x^4-20x^2-16x-8}p=\pm 4b_p.
$$
Together with Lemma \ref{lemma: L for E8}, this yields our main
theorem.
\end{subsection}
\end{section}


\end{document}